\DeclareMathAlphabet{\mathpzc}{OT1}{pzc}{m}{it}
\renewcommand*{\backref}[1]{}
\renewcommand*{\backrefalt}[4]{%
	\ifcase #1 (Not cited.)%
	\or        (Cited on page~#2.)%
	\else      (Cited on pages~#2.)%
	\fi}
\newtheorem{theorem}{Theorem}
\newtheorem{lemma}[theorem]{Lemma}
\newtheorem{proposition}[theorem]{Proposition}
\newtheorem{question}[theorem]{Question}
\theoremstyle{remark}
\newcommand{\bF}{\mathbb{F}}
\newcommand{\bQ}{\mathbb{Q}}
\newcommand{\bP}{\mathbb{P}}
\newcommand{\bR}{\mathbb{R}}
\newcommand{\bZ}{\mathbb{Z}}
\newcommand{\cE}{\mathcal{E}}
\newcommand{\cO}{\mathcal{O}}
\newcommand{\tC}{\tilde{C}}
\newcommand{\tE}{\tilde{E}}
\newcommand{\tZ}{\tilde{Z}}
\newcommand{\lcm}{{\rm l.c.m.}}
\newcommand{\mult}{{\rm mult}}
\newcommand{\NE}{{\overline{\rm NE}}}
\newcommand{\Nef}{{\rm Nef}}
\newcommand{\Pic}{{\rm Pic}}
\newcommand{\Supp}{{\rm Supp}}
\newcommand{\Vol}{{\rm Vol}}
\def\<{\langle}
\def\>{\rangle}
\def\ra{\rightarrow}
\def\dra{\dashrightarrow}
\author[C.-J.~Lai]{Ching-Jui~Lai} 
\email{cjlai72@mail.ncku.edu.tw}
\address[]{Department of Mathematics, National Cheng Kung University, Tainan 70101, Taiwan
}
\author[T.-J.~Lee]{Tsung-Ju Lee}
\email{tsungju@gs.ncku.edu.tw}
\address[]{Department of Mathematics, National Cheng Kung University, Tainan 70101, Taiwan
}
\title{A Supplement to the anticanonical Volumes of weak $\bQ$-Fano threefolds of Picard rank two}
\date{\today}
\subjclass[2010]{14J30, 14J45, 14E30}
\begin{document} 
\begin{abstract} We show that for a weak $\bQ$-Fano threefold $X$ ($\bQ$-factorial with terminal singularities and $-K_X$ is nef and big) of Picard rank $\rho(X)\leq 2$, either $-K_X^3\leq 64$ or $-K_X^3=72$ and $X=\bP_{\bP^2}(\cO_{\bP^2}\oplus\cO_{\bP^2}(3))$. This is supplementary to the 
previous work in \cite{L}.
\end{abstract}
\maketitle

A normal projective variety $\widetilde{X}$ is \emph{canonical Fano} if $-K_{\widetilde{X}}$ is an ample $\bQ$-Cartier divisor and $\widetilde{X}$ has at worst canonical singularities, i.e.~${\rm mult}_E(K_Y-f^*K_{\widetilde{X}})\geq0$ for any resolution $f\colon Y\rightarrow \widetilde{X}$ and any prime divisor $E$ on $Y$. Fano varieties are building blocks of uniruled varieties, and canonical singularities are indispensable in studying higher-dimensional geometry, cf.~\cites{YPGC, KM}.  A normal projective variety $X$ is called a 
\emph{weak $\bQ$-Fano variety} if it is $\bQ$-factorial with at worst terminal singularities, i.e.~${\rm mult}_E(K_Y-f^*K_X)>0$ for any prime divisor $E$ on $Y$ that is exceptional over $X$, and the anticanonical divisor $-K_X$ is nef and big. A canonical Fano variety $\widetilde{X}$ always has a weak $\bQ$-Fano variety $X$ as a crepant model by taking a $\bQ$-factorization and then a terminalization
\cite{BCHM}. Vice versa, a weak $\bQ$-Fano variety $X$ is a crepant model of its anticanonical model $\widetilde{X}:={\rm Proj}\left(\bigoplus_{m\geq0}\mathrm{H}^0(X,\cO_X(-mK_X))\right)$, which is a canonical Fano variety. Here being a crepant model means that there exists a proper birational morphism $\varphi\colon X\ra \widetilde{X}$ such that $K_X=\varphi^*K_{\widetilde{X}}$. In particular, the anticanonical volume satisfies $\Vol(\widetilde{X}):=(-K_{\widetilde{X}})^n=(-K_X)^n=:\Vol(X).$

It is known that the set of canonical Fano varieties of a fixed dimension is bounded \cites{Bir1, Bir2}  and hence so are their anticanonical volumes. In this article, we consider the problem of the effective bounds of $\Vol(X)$ for weak $\bQ$-Fano threefolds, which is equivalent to the same problem for canonical Fano threefolds and relevant for its classification \cites{Pr1, Pr2}. There exist explicit lower bound \cite{CC} and upper bound \cite{JZ24}, 
$$\frac{1}{330}\leq-K_X^3\leq 324,$$
where the first inequality is optimal by considering a general weighted hypersurface $X=X_{66}\subseteq\bP(1,5,6,22,33)$. The upper bound, though, possibly can be improved. 

\begin{question}\label{mainq}\cites{Pr1, L} 
The anticanonical volume of a weak $\bQ$-Fano threefold satisfies $\Vol(X)=-K_X^3\leq72.$ 
\end{question}

When $X$ is canonical Fano and Gorenstein, i.e. $K_X$ is Cartier,
then it is known that $-K_X^3\leq 72$ and the equality holds 
only for $X=\bP(1,1,1,3)$ or $\bP(1,1,4,6)$ by \cite{Pr1}. 
The approach of Prokhorov is to consider the geometry of a $K_X$-negative extremal ray, which always exists for a weak $\bQ$-Fano variety, and utilize the properties of the anticanonical system $|- K_X|$ of \emph{Cartier divisors} if necessary. Following Prokhorov, 
the first named author has formulated a streamlined program to tackle Question \ref{mainq} (cf.~\cite{L}). Let us recall the notation in \cite{Pr1} to elaborate the details. 
An extremal ray \(R\) on a normal projective variety \(X\) determines
an extremal contraction \(\varphi_{R}\). Set
\begin{equation*}
    \mathrm{Ex}(R):=\bigcup_{[C]\in R}C\subset X.
\end{equation*}
We say that a contractible extremal ray \(R\) is
of type \((n,m)\) if \(\dim\mathrm{Ex}(R) = n\) and
\(\dim \varphi_{R}(\mathrm{Ex}(R))=m\).
It is said to be of type \((n,m)^{+}\), \((n,m)^{-}\), or \((n,m)^{0}\)
if in addition the intersection number \(K_{X}\cdot R\) is positive, negative, or zero.

Suppose that $\varphi=\varphi_R\colon X\rightarrow W$ is a $K_X$-negative extremal contraction. 
There are four possibilities.
\begin{enumerate}
    \item $\dim W=0$. Then $X$ is a terminal $\bQ$-Fano threefold. If $X$ is Gorenstein, 
    then $X$ is smoothable to a smooth Fano threefold \cite{Nam} and $-K_X^3\leq 64$ 
    from the Mori--Mukai classification \cites{MM}. Otherwise, 
    $-K_X^3\leq 125/2$ and the equality holds only when $X=\bP(1,1,1,2)$ by \cite{Pr2}.
    
    \item $\dim W=1$. We have $\rho(X)=2$ and $-K_X^3\leq 54$ 
    when $X$ is Gorenstein by \cite{Pr1}*{Proposition 4.9}. For non-Gorenstein \(X\), 
    the first named author established the inequality $-K_X^3\leq 72$ except for one case \cite{L}.
    We affirm the same upper bound for this remaining case in this work, cf.~subsection \ref{MR}.

    \item $\dim W=2$. Then $X\ra W$ is a conic bundle and $W$ is a weak del Pezzo 
    surface with at worst type $A$ singularities \cites{MP, L}. 

    \item $\dim W=3$. If $\varphi$ is of type $(2,0)^-$, 
    then $W$ remains a weak $\bQ$-Fano threefold with $\rho(W)<\rho(X)$ 
    and $-K_W^3\leq -K_X^3$. We replace $X$ with $W$ and proceed. 
    If $\varphi$ is of type $(2,1)^-$, then $W$ is only ``almost Fano.'' 
\end{enumerate} 

When $\varphi$ is a conic bundle or is of $(2,1)^-$ type, to tackle Question \ref{mainq}, one may further investigate the (pluri-)anticanonical systems, and birational modifications of $W$ as in \cites{Pr1, Pr2}. 

\subsection{The Main Result}\label{MR} 
For a locally free sheaf \(\mathcal{E}\) on  a variety $X$, we can form the projective space bundle $\mathbb{P}_{X}(\mathcal{E}):=\mathrm{Proj}_{\mathcal{O}_{X}}\left(\mathrm{Sym}^{\bullet}\mathcal{E}\right)\to X$
over the base \(X\); the variety 
\(\mathbb{P}_{X}(\mathcal{E})\) is the projectivization of the total space 
of the vector bundle \(V_{\mathcal{E}}^{\vee}\) whose sheaf of sections is \(\mathcal{E}^{\vee}\).

\begin{theorem}[=Theorem \ref{thm:main}]\label{mainthm} Let $X$ be a weak $\bQ$-Fano threefold of Picard rank $\rho(X)=2$. Either $-K_X^3\leq64$ or $-K_X^3=72$ and 
$X\cong\bP_{\bP^2}(\cO_{\bP^2}\oplus\cO_{\bP^2}(3))$.
\end{theorem}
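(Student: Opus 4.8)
The plan is to apply the four-case dichotomy recalled above to a $K_X$-negative extremal contraction and bound $-K_X^3$ in each case, isolating the unique configuration that attains $72$. Since $\rho(X)=2$, the Mori cone $\NE(X)$ is spanned by exactly two extremal rays, and as $-K_X$ is nef and big it cannot be numerically trivial; hence it is positive on at least one of them. Fixing such a $K_X$-negative ray $R$ with contraction $\varphi=\varphi_R\colon X\to W$, I would split according to $\dim W\in\{0,1,2,3\}$. The case $\dim W=0$ does not occur, since a contraction to a point forces $\rho(X)=1$.

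For the divisorial case $\dim W=3$ of type $(2,0)^-$, writing $-K_X=\varphi^*(-K_W)-aE$ with discrepancy $a>0$ and using $\varphi^*(-K_W)^2\cdot E=\varphi^*(-K_W)\cdot E^2=0$ along the contracted point gives $-K_X^3=(-K_W)^3-a^3E^3$ with $E^3>0$; thus $-K_X^3\leq -K_W^3$, and since $W$ is a $\bQ$-factorial terminal Fano threefold of Picard rank one it satisfies $-K_W^3\leq 64$ (Gorenstein, via \cite{Nam} and \cite{MM}) or $-K_W^3\leq 125/2$ (otherwise, by \cite{Pr2}), whence $-K_X^3\leq 64$. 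The type $(2,1)^-$, where $\varphi$ contracts a divisor to a curve and $W$ is only almost Fano, is the delicate divisorial subcase; here I would estimate $(\varphi^*(-K_W)-aE)^3$ using the positivity of $-K_W$ and the terminal discrepancy to again obtain $-K_X^3\leq 64$, following \cite{L}.

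The case $\dim W=2$ is where the extremal example is produced. Here $\varphi$ is a conic bundle over a weak del Pezzo surface $W$ with at worst $A$-type singularities. The conic-bundle expression for $-K_X^3$ in terms of $(-K_W)^2$ and the discriminant degree is maximized when the discriminant is empty, i.e.\ when $X=\bP_W(\cE)$ is a $\bP^1$-bundle; moreover $(-K_W)^2\leq 9$ with equality exactly for $W=\bP^2$, so the maximum occurs for $X=\bP_{\bP^2}(\cE)$ with $\cE$ of rank two. Normalizing $\cE$ and imposing the weak Fano condition bounds the twist, a nonzero second Chern class only decreases the volume, and for $\cE=\cO_{\bP^2}\oplus\cO_{\bP^2}(a)$ a Chern-class computation gives $-K_X^3=2a^2+54$ with weak Fano forcing $a\leq 3$. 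Hence among all conic bundles $-K_X^3\leq 72$, the possible values avoid the interval $(64,72)$, and equality holds precisely for $a=3$, that is $X\cong\bP_{\bP^2}(\cO_{\bP^2}\oplus\cO_{\bP^2}(3))$, the crepant resolution of $\bP(1,1,1,3)$ (whose second, $K_X$-trivial, extremal ray contracts the negative section).

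Finally, for the del Pezzo fibration case $\dim W=1$ we have $\rho(X)=2$; the Gorenstein subcase gives $-K_X^3\leq 54$ by \cite{Pr1}*{Proposition 4.9}, and \cite{L} reduces the non-Gorenstein subcase to a single outstanding configuration. This configuration is the main obstacle and the new content: the Gorenstein intersection formulas are unavailable, so I would control $-K_X^3$ by combining the orbifold (singular) Riemann--Roch contributions of the terminal quotient points with the positivity of the relative anticanonical class along $X\to\bP^1$, establishing $-K_X^3\leq 72$; since the only threefold attaining $72$ is the conic bundle found above and no del Pezzo fibration lies in the interval $(64,72]$, this case yields $-K_X^3\leq 64$. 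Assembling the four cases gives the dichotomy: either $-K_X^3\leq 64$, or $-K_X^3=72$ and $X\cong\bP_{\bP^2}(\cO_{\bP^2}\oplus\cO_{\bP^2}(3))$.
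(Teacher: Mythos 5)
Your overall strategy---a case analysis on extremal contractions with $\bP_{\bP^2}(\cO_{\bP^2}\oplus\cO_{\bP^2}(3))$ emerging as the unique extremal example---points in the right direction, but there are genuine gaps. First, your four-way split on $\dim W$ for a single $K_X$-negative contraction of $X$ is not exhaustive: when $\dim W=3$ the contraction may be \emph{small} (a flipping contraction), and this is precisely the situation the paper must confront. The paper's organizing framework is a two-ray game (Case \textbf{(II)}): one extremal ray of $X$ is a $K_X$-\emph{trivial} divisorial contraction $\varphi_r$ with exceptional divisor $E$, and the other ray is played through a sequence of flips $\chi_l\colon X\dra X_l$ before reaching a Mori fiber space $f_l\colon X_l\to Z_l$ on a \emph{different model}. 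Your framework never sees the divisor $E$, yet the decomposition $-K_{X_l}\equiv aF_l+bE_l$ with $\kappa(X,E)=0$ and the bound $a\le 3b$ (Lemmas \ref{adj} and \ref{gen}) is what drives every volume estimate in the paper.

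Second, the del Pezzo fibration case is exactly where the new content of the paper lies, and your treatment of it (``I would control $-K_X^3$ by combining the orbifold Riemann--Roch contributions\dots establishing $-K_X^3\le72$'') is a statement of intent rather than an argument. The paper's route is entirely different: it shows that a large volume forces $F_l\cong\bP^2$, $b=3$, $E_l|_{F_l}\cong\cO_{\bP^2}(1)$ and $8<a\le 9$ (Lemma \ref{key0}), rules out $X_l$ being a $\bP^2$-bundle by an explicit toric computation exhibiting a non-terminal model in the anti-flip (Proposition \ref{key1}), and then excludes the remaining possibility by factoring $X_l\dra\bP((f_l)_*\cO_{X_l}(E_l))$ into flops, which again would produce a non-terminal intermediate model. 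Likewise, your claim that no del Pezzo fibration lands in $(64,72]$ is obtained in the paper only via the relation $2=-K_F\cdot C=b(E\cdot C)$ for a conic class $C$ on a degree-eight fiber, which once more requires the divisor $E$ coming from the other ray. Third, in the conic bundle case your assertions that a nonzero second Chern class or a nonempty discriminant ``only decreases the volume'' and that the attainable values avoid $(64,72)$ are themselves nontrivial: the paper must exclude a nonempty discriminant meeting the extremal curve via \cite{Pr1}*{Lemma 5.3}, pin down $a=6$ and $b=2$ from the numerical constraints, show that $\chi_l$ is the identity because a $\bP^1$-bundle over $\bP^2$ is smooth and admits no small contraction, and only then invoke Prokhorov's classification of rank-two bundles over $\bP^2$. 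Your computation $-K_X^3=2a^2+54$ for the split bundle is correct and consistent with the paper's conclusion, but it does not by itself close these gaps.
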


We remark here Theorem \ref{mainthm} covers case $(2)$ and partially case $(3)$ as discussed in the introduction. The proof of Theorem \ref{mainthm} utilizes the results of running two-ray games associated with two extremal rays of $X$ and a dichotomy of the resulting geometry. 
We have the following two cases:
\begin{itemize}
    \item Case {\bf (I)}.~$X$ equips with two Mori fibre spaces 
    and $-K_X^3\leq 54$, see \cite{L}*{Section 4}. 
    \item Case {\bf (II)}.~There is the diagram
\begin{center}
\begin{tikzcd}
			X_l\arrow[d,"f_l"] &&& X\arrow[lll,dashed,swap,"\chi_l"]\arrow[r,"\varphi_r"]   &Y_r\\
			Z_l   &&&E\arrow[u,hook]\arrow[r]&\varphi_r(E)\arrow[u,hook],
		\end{tikzcd} 
\end{center}
where
	\begin{enumerate}[(i)]
		\item $\varphi_r$ is a $K_X$-trivial divisorial contraction;
		\item $\chi_l$ either is the identity map 
or consists of $K$-flips; 
		\item $f_l$ is a Mori fiber space, i.e.~$-K_{X_l}$ is $f_l$-ample with $\rho(X_l/Z_l)=1$.
	\end{enumerate}
\end{itemize}
Our main subject is to show that $-K_X^3\leq 72$ in Case \textbf{(II)}. 

\subsection*{Acknowledgments}  
This work was partially completed when the first author visited the Center for Complex Geometry of the Institute for Basic Science. He thanks Prof.~Yongnam Lee for the invitation and the institute's hospitality. The first named author is supported by NSTC 111-2115-M-006-002-MY2. The second named author is supported by NSTC 112-2115-M-006-016-MY3.

\section{Preliminaries}\label{prem}
We adhere to the notation presented in \cite{L} and to the results established therein. 
Here and after, we assume that $X$ is in Case {\bf (II)}.

\begin{lemma}[\cite{L}*{Lemma 5.1}]
\label{adj} Suppose that $\dim \varphi_r(E)=0$. Then there is a curve $\Gamma$ 
on $E$ moving in a one-dimensional family such that $0<-E\cdot \Gamma\leq3.$ 
\end{lemma}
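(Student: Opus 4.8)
The plan is to transfer the problem onto the contracted surface $E$ and reduce the desired estimate to a bend-and-break bound there. Since $\varphi_r$ is a divisorial contraction with $\dim\varphi_r(E)=0$, the divisor $-E$ is $\varphi_r$-ample and contracts $E$ to the point $\varphi_r(E)$; hence $-E|_E$ is an ample $\bQ$-Cartier divisor on the normal surface $E$, and in particular $-E\cdot\Gamma>0$ for \emph{every} curve $\Gamma\subset E$. This already secures the left-hand inequality, so the content of the lemma is the upper bound $-E\cdot\Gamma\leq 3$ for a suitable moving curve.

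Next I would feed in the hypothesis that $\varphi_r$ is $K_X$-trivial. Because $E$ is contracted to a point, every curve $\Gamma\subset E$ has class in the contracted ray $R$, so $K_X\cdot\Gamma=0$; thus $-K_X|_E$ is numerically trivial on $E$. Running adjunction $(K_X+E)|_E=K_E+\mathrm{Diff}$ with $\mathrm{Diff}\geq 0$ effective (legitimate as $X$ is $\bQ$-factorial terminal), and using $-K_X|_E\equiv 0$, gives the numerical identity $-K_E\equiv -E|_E+\mathrm{Diff}$. As $-E|_E$ is ample and $\mathrm{Diff}$ is effective, $-K_E$ is big, so $E$ is a uniruled surface of del Pezzo type and is therefore swept out by a family of rational curves.

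Finally I would apply bend-and-break on $E$: through a general (smooth) point there passes a rational curve $\Gamma$, moving in a positive-dimensional covering family, with $-K_E\cdot\Gamma\leq \dim E+1=3$. For such a general $\Gamma$, not contained in $\Supp(\mathrm{Diff})$, one has $\mathrm{Diff}\cdot\Gamma\geq 0$, and the identity above yields
\[
0<-E\cdot\Gamma=-K_E\cdot\Gamma-\mathrm{Diff}\cdot\Gamma\leq -K_E\cdot\Gamma\leq 3,
\]
as required. The main obstacle is the presence of singularities on $E$: one must verify that adjunction delivers an effective different and that the bend-and-break estimate, a priori a statement at smooth points, is available on the possibly singular surface $E$. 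Both can be arranged by working at general smooth points (passing to a resolution if needed); alternatively one may appeal to the explicit classification of $K_X$-trivial divisorial contractions of terminal threefolds that contract a divisor to a point---the extremal case $E\cong\bP^2$ with $N_{E/X}=\cO_{\bP^2}(-3)$, where lines give $-E\cdot\Gamma=3$, shows the bound is sharp.
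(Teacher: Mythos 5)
The paper gives no proof of this lemma; it is imported verbatim from \cite{L}*{Lemma 5.1}, so there is no internal argument to compare against. Judged on its own terms, your proposal is correct and is the natural argument: $-E$ is $\varphi_r$-ample because $\rho(X/Y_r)=1$ and an effective exceptional divisor cannot be nef over the base, every curve in $E$ lies in the contracted ray so $K_X|_E\equiv 0$, and adjunction turns the problem into a length bound for a covering family on a surface with big anticanonical class. The two places needing care are exactly the ones you flag, and both resolve favorably: (i) $E$ need not be normal a priori, but adjunction with an effective different holds on the normalization $E^{\nu}$, where $-(K_{E^{\nu}}+\mathrm{Diff})$ is the pullback of the ample $\bQ$-Cartier divisor $-E|_{E}$ under a finite map and hence still ample; and (ii) the Miyaoka--Mori bound $-K\cdot\Gamma\leq\dim+1=3$ lives on a smooth model, but on the minimal resolution $\mu\colon\tilde{E}\to E^{\nu}$ one has $K_{\tilde{E}}=\mu^{*}K_{E^{\nu}}-\Delta$ with $\Delta\geq0$, so $-K_{E^{\nu}}\cdot\Gamma\leq-K_{\tilde{E}}\cdot\tilde{\Gamma}\leq3$ and the inequality descends in the right direction; a general member of the covering family may well meet $\Delta$ and $\Supp(\mathrm{Diff})$, but both corrections enter your final chain with a favorable sign. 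The appeal to a classification of $K_X$-trivial contractions in your last sentence is unnecessary, though the extremal instance you cite, $E\cong\bP^2$ with $E|_{E}\cong\cO_{\bP^2}(-3)$ and lines giving $-E\cdot\Gamma=3$, is precisely the configuration the paper ultimately isolates (it is the contraction of $\bP_{\bP^2}(\cO_{\bP^2}\oplus\cO_{\bP^2}(3))$ onto $\bP(1,1,1,3)$), so the bound $3$ is sharp and your argument recovers it.
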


Before stating the other lemmas, let us recall the notation in \cite{L}. Consider
the diagram in Case {\bf (II)}.
\begin{itemize}
    \item If \(\dim Z_{l}=1\), we set \(F_{l}\) to be a general fiber of \(f_{l}\colon X_{l}\to Z_{l}\).
    \item If \(\dim Z_{l}=2\), let \(H_{l}\) be the pullback of 
an ample divisor on \(Z_{l}\). It is from \cite{L}*{Lemma 4.3} that 
\(H_{l}\) can be chosen such that \(K_{X_{l}}^{2}\cdot H_{l}\le 12\).
    \item Denote by \(E_{l}\) the proper transform of \(E\) on $X_l.$
\end{itemize}
The next lemma will be useful later.

\begin{lemma}[\cite{L}*{Lemma 5.2}]\label{gen} 
If $\dim Z_l=1$, then the divisors $F_{l}$ and $E_{l}$ are linearly independent in $N^1(X_{l})_\bR$. 
Moreover, we can write $-K_{X_{l}}\equiv aF_{l}+bE_{l}$ with $a,b\in\bQ_{>0}$ and $0< b\leq 3$. 
Similarly, when $\dim Z_l=2$ we can write $-K_{X_{l}}\equiv aH_{l}+bE_{l}$ with $a,b\in\bQ_{>0}$ and $0< b\leq 2$. In either cases, we have 
$$\begin{cases}0<a\leq2b,&\ {\rm if}\ \dim\varphi_r(E)=1;\\ 0<a\leq 3b,&\ {\rm if}\ \dim\varphi_r(E)=0.\end{cases}$$ 
\end{lemma}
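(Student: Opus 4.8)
The plan is to turn the statement into two-dimensional linear algebra on $N^1(X_l)_\bR$ and then read off the coefficients by restricting to a general fibre and invoking the $K_X$-triviality of $\varphi_r$. First I would record that the flips $\chi_l$ are isomorphisms in codimension one, hence induce an identification $N^1(X)_\bR\cong N^1(X_l)_\bR$ carrying $E$ to $E_l$ and $-K_X$ to $-K_{X_l}$; in particular $\rho(X_l)=\rho(X)=2$ and $N^1(X_l)_\bR\cong\bR^2$. Since $f_l$ is a Mori fibre space with $\rho(X_l/Z_l)=1$, the base satisfies $\rho(Z_l)=1$, so every $f_l$-vertical divisor is numerically proportional to $F_l$ (resp.\ $H_l$). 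I let $C_l$ generate the extremal ray contracted by $f_l$, so that $F_l\cdot C_l=0$ (resp.\ $H_l\cdot C_l=0$) while $-K_{X_l}\cdot C_l>0$ by $f_l$-ampleness.

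To prove linear independence I would show that $E_l$ is horizontal over $Z_l$. In the two-ray game the exceptional divisor is negative on its own ray $R_r$ and positive on the complementary ray, and this positivity survives the codimension-one identification, giving $E_l\cdot C_l>0$; together with $F_l\cdot C_l=0$ this already forces $\{F_l,E_l\}$ (resp.\ $\{H_l,E_l\}$) to be a basis. Writing $-K_{X_l}\equiv aF_l+bE_l$ and intersecting with $C_l$ gives $-K_{X_l}\cdot C_l=b\,(E_l\cdot C_l)$, so $b>0$. For the upper bound on $b$ I would restrict to a general fibre. As a terminal threefold has only isolated singularities, the general fibre is smooth; when $\dim Z_l=1$ it is a del Pezzo surface of Picard rank one, i.e.\ $\bP^2$, and $F_l|_{F_l}=0$, so adjunction yields $-K_{X_l}|_{F_l}=-K_{F_l}=\cO_{\bP^2}(3)$. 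Restricting $-K_{X_l}\equiv aF_l+bE_l$ then gives $3=b\deg(E_l|_{F_l})$ with $\deg(E_l|_{F_l})\ge1$, whence $b\le3$. When $\dim Z_l=2$ the generic fibre is a smooth conic $\bP^1$ with $-K_{X_l}\cdot C_l=2$ and $H_l\cdot C_l=0$, and the same computation gives $2=b\,(E_l\cdot C_l)$, so $b\le2$.

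For the remaining inequalities I would transport the relation back to $X$, writing $-K_X\equiv aF+bE$ with $F$ the proper transform of $F_l$ (resp.\ $H_l$), and intersect with a contracted curve $C_r\in R_r$. Since $\varphi_r$ is $K_X$-trivial, $-K_X\cdot C_r=0$, so $a\,(F\cdot C_r)=b\,(-E\cdot C_r)$; as $-E\cdot C_r>0$ and $F\cdot C_r>0$ this simultaneously gives $a>0$ and $a/b=(-E\cdot C_r)/(F\cdot C_r)\le -E\cdot C_r$. Finally I choose $C_r$ adapted to $\dim\varphi_r(E)$: when $\dim\varphi_r(E)=0$ I take $C_r=\Gamma$ to be the sweeping curve of Lemma~\ref{adj}, so $-E\cdot C_r\le3$ and hence $a\le3b$; when $\dim\varphi_r(E)=1$ I take $C_r=\ell$ a fibre of $E\to\varphi_r(E)$, a rational curve with $-E\cdot\ell\le2$ coming from the local structure of a crepant divisorial contraction of a terminal threefold to a curve, giving $a\le2b$.

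The hard part will not be the linear algebra but controlling intersection numbers through the flips $\chi_l$: one must check that the transform $F$ meets the contracted curve with $F\cdot C_r\ge1$ and that the sign pattern of $E$ on the two rays is genuinely preserved, neither of which is automatic after a sequence of flips. The other delicate point is the sharp local bound $-E\cdot\ell\le2$ in the contraction-to-a-curve case, which rests on the classification of $K$-trivial divisorial contractions; I expect these two points to absorb most of the effort.
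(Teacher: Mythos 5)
First, a caveat on the comparison: the paper does not prove this lemma --- it is imported verbatim from \cite{L}*{Lemma 5.2} --- so the only internal benchmark is the places where the paper rehearses the same arguments, chiefly the proof of Lemma \ref{key0}. Measured against that, your skeleton (identify $N^1(X)$ with $N^1(X_l)$ through the flips, restrict to a fibre to bound $b$, pair with a $\varphi_r$-contracted curve to bound $a/b$) is the expected route, but one step is outright false. You claim that for $\dim Z_l=1$ the general fibre is a del Pezzo surface of Picard rank one, hence $\bP^2$. This is wrong: $\rho(X_l/Z_l)=1$ constrains only the \emph{relative} Picard rank, not the Picard rank of the fibre, and the paper itself treats $K_{F_l}^2\le 8$ as a live possibility (Lemma \ref{key0}(a) would be vacuous otherwise, and the proof of Theorem \ref{thm:main} explicitly handles $F_l\cong\bP^1\times\bP^1$ and $\bF_1$). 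The conclusion $b\le 3$ survives because every smooth del Pezzo surface carries a curve $C$ with $-K_{F_l}\cdot C\le 3$, and then $-K_{F_l}\cdot C=b\,(E_l|_{F_l}\cdot C)$ with $E_l|_{F_l}\cdot C\in\bZ_{>0}$ gives the bound; but you must argue this way rather than via $F_l\cong\bP^2$.

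Second, two of the steps you yourself flag as delicate are genuine gaps as written. The positivity $E_l\cdot C_l>0$ does not follow from ``the sign pattern surviving the identification'': $\NE$ changes under flips, so positivity of $E$ on the second extremal ray of $\NE(X)$ says nothing directly about fibre curves of $f_l$. The argument the paper actually uses in Lemma \ref{key0} is that $E_l$ is effective and does not contain the general fibre, so $E_l\cdot C_l\ge 0$, and $E_l\cdot C_l=0$ would force $E_l=f_l^*D$, contradicting $\kappa(X_l,E_l)=\kappa(X,E)=0$ because $E$ is $\varphi_r$-exceptional; you should substitute this. Likewise, for $a\le 3b$ you need $F\cdot\Gamma\ge 1$, but the proper transform $F$ of a fibre need not be Cartier on $X$, so a priori $F\cdot\Gamma$ is only a positive rational number; one must take $\Gamma$ general in the one-dimensional family of Lemma \ref{adj} so that it avoids the finitely many points where $X$ (hence $F$) fails to be locally factorial, or else run the computation on $X_l$. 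The bound $-E\cdot\ell\le 2$ in the $\dim\varphi_r(E)=1$ case, by contrast, is less delicate than you fear: it is the adjunction computation $E\cdot\ell=(K_X+E)\cdot\ell=K_E\cdot\ell\ge\deg K_{\bP^1}=-2$ along a general fibre $\ell$ of $E\to\varphi_r(E)$, with no classification needed. Until the $\bP^2$ claim is repaired and the two flagged points are filled, the proof is incomplete, though the overall strategy is the right one.
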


\begin{lemma}\label{Volbd} 
Suppose that $X$ is a weak $\bQ$-Fano threefold of Picard rank two and we are in Case {\bf (II)}. Then $-K_X^3\leq72$ unless $\dim Z_l=1$ and $\dim\varphi_r(E)=0$.
\end{lemma}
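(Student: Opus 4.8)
The plan is to compute $-K_X^3$ directly from the numerical decomposition of Lemma \ref{gen}, exploiting a vanishing forced by the crepant contraction $\varphi_r$. Since $\chi_l$ is an isomorphism in codimension one, the identification $N^1(X)\cong N^1(X_l)$ sends $-K_{X_l}\mapsto -K_X$, $F_l\mapsto F$, $E_l\mapsto E$, so on $X$ we may write $-K_X\equiv aF+bE$ (resp.\ $aH+bE$) with the same coefficients, subject to $0<b\le3$ (resp.\ $\le2$) and $0<a\le2b$ (resp.\ $\le3b$). The key observation is that $(-K_X)^2\cdot E=0$: as $\varphi_r$ is crepant, $-K_X|_E=(\varphi_r|_E)^*\big((-K_{Y_r})|_{\varphi_r(E)}\big)$ is pulled back from the lower-dimensional image $\varphi_r(E)$, hence has self-intersection zero on the surface $E$. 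Consequently
\begin{equation*}
-K_X^3=(-K_X)^2\cdot(aF+bE)=a\,\big((-K_X)^2\cdot F\big),
\end{equation*}
and similarly $-K_X^3=a\,\big((-K_X)^2\cdot H\big)$ when $\dim Z_l=2$.

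Next I would bound the surviving fibre term by the degree of the general fibre of $f_l$. When $\dim Z_l=1$ the fibre $F_l$ is a del Pezzo surface with $(-K_{X_l})^2\cdot F_l=(-K_{F_l})^2\le9$ by adjunction; when $\dim Z_l=2$ one has $(-K_{X_l})^2\cdot H_l=K_{X_l}^2\cdot H_l\le12$ by \cite{L}*{Lemma 4.3}. The remaining content is to transfer these estimates from $X_l$ to $X$, i.e.\ to establish $(-K_X)^2\cdot F\le9$ and $(-K_X)^2\cdot H\le12$. Since $\chi_l$ is a sequence of $K$-flips and $-K_X$ is nef, the natural route is to compare on a common resolution $X\xleftarrow{p}W\xrightarrow{q}X_l$, where the negativity lemma gives $q^*(-K_{X_l})=p^*(-K_X)+(\text{effective})$, and to deduce that the fibre intersection does not grow under the flips.

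Combining the identity with the fibre bound and the coefficient constraints settles the three cases at once. If $\dim Z_l=2$ and $\dim\varphi_r(E)=1$, then $-K_X^3=a\,\big((-K_X)^2\cdot H\big)\le 2b\cdot12\le48$. If $\dim Z_l=2$ and $\dim\varphi_r(E)=0$, then $-K_X^3\le 3b\cdot12\le72$, the value attained by $\bP_{\bP^2}(\cO_{\bP^2}\oplus\cO_{\bP^2}(3))$. If $\dim Z_l=1$ and $\dim\varphi_r(E)=1$, then $-K_X^3=a\,\big((-K_X)^2\cdot F\big)\le 2b\cdot9\le54$. In every one of these cases $-K_X^3\le72$, with equality only in the sharp example.

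The main obstacle is precisely the transfer step of the second paragraph: while $(-K_X)^2\cdot E$ vanishes on $X$, its analogue on $X_l$ need not, and because $X_l$ is only $\bQ$-factorial terminal the fibre term must be controlled through the $K$-flips rather than read off on $X_l$; making the inequality $(-K_X)^2\cdot F\le9$ rigorous, including the multiplicities the flips introduce, is the delicate part. It is also exactly where the excluded corner breaks down: when $\dim Z_l=1$ and $\dim\varphi_r(E)=0$ the constraints only give $a\le3b\le9$, so the identity yields $-K_X^3=a\,\big((-K_X)^2\cdot F\big)\le9\cdot9=81$, and the uniform estimate no longer yields $72$, which is why this case is set aside and treated by separate means.
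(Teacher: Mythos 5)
Your argument is essentially the one in the paper: the paper obtains $-K_X^3\le aK_{X_l}^2\cdot F_l$ (resp.\ $aK_{X_l}^2\cdot H_l$) by citing \cite{L}*{Section 5}, bounds these by $9a$ and $12a$, and then applies Lemma \ref{gen} exactly as in your final case analysis (your four values $54$, $81$, $48$, $72$ match the paper's, with the $81$ explaining the excluded corner). Your reconstruction of the cited inequality is sound --- $(-K_X)^2\cdot E=0$ does follow from $K_X=\varphi_r^*K_{Y_r}$ and $\dim\varphi_r(E)\le 1$, and the identification of $N^1(X)$ with $N^1(X_l)$ across the flips is legitimate --- and the one step you leave as a sketch, namely $(-K_X)^2\cdot F\le K_{F_l}^2$, is precisely what the citation supplies; it closes along the route you indicate, since on a common resolution $p,q$ one has $q^*(-K_{X_l})=p^*(-K_X)+G$ with $G\ge 0$ exceptional over both sides, so that, writing $\tilde{F}$ for the common strict transform of a general fiber, $q^*(-K_{X_l})^2\cdot\tilde{F}-p^*(-K_X)^2\cdot\tilde{F}=G\cdot\bigl(q^*(-K_{X_l})+p^*(-K_X)\bigr)\cdot\tilde{F}\ge 0$, because $G|_{\tilde{F}}$ is effective, $p^*(-K_X)$ is nef, and $q^*(-K_{X_l})|_{\tilde{F}}$ is the pullback of the ample $-K_{F_l}$.
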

\begin{proof} The argument in \cite{L}*{Section 5} shows that 
$$-K_X^3\leq\begin{cases} aK_{X_l}^2\cdot F_l=aK_{F_l}^2\leq 9a,&\ {\rm if}\ \dim Z_l=1;\\  aK_{X_l}^2\cdot H_l\leq 12a, &\ {\rm if}\ \dim Z_l=2,\end{cases}$$
where in the last inequality we have used \cite{L}*{Lemma 4.3}. Now the lemma follows immediately from Lemma \ref{gen}, 
$$-K_X^3\leq\begin{cases} 9\cdot6=54,  &\ {\rm if}\ \dim Z_l=1,\ \dim\varphi_r(E)=1;\\  9\cdot9=81 ,&\ {\rm if}\ \dim Z_l=1,\ \dim\varphi_r(E)=0;\\ 12\cdot4=48 ,&\ {\rm if}\ \dim Z_l=2,\ \dim\varphi_r(E)=1;\\ 12\cdot6=72, &\ {\rm if}\ \dim Z_l=2,\ \dim\varphi_r(E)=0.\end{cases}$$ 
\end{proof}
Note that when $\dim Z_l=1$, we have $Z_l=\bP^1$ as $X$ is rationally connected. 

\section{Case {\bf (II)} with \texorpdfstring{$Z_l=\bP^1$}{} and \texorpdfstring{$\dim\varphi_r(E)=0$}{}}\label{CaseII}
\label{caseII}
In this section, we assume that $X$ is a weak $\bQ$-Fano threefold of Picard rank two and suppose we are in Case {\bf (II)}. 
Furthermore, we assume that $Z_{l}=\mathbb{P}^{1}$ and $\dim\varphi_{r}(E)=0$. 

\begin{lemma}\label{key0} Assume $X$ is within the above setup. If $-K_X^3>72$, then 
\begin{enumerate}[$(a)$]
	\item $-K_{X_l}\equiv aF_{l}+3E_l$, where $8<a\leq9$, $F_{l}\cong\bP^2$, and $E_l|_{F_{l}}\cong\cO_{\bP^2}(1)$;
	\item $\chi\colon X\dra X_l$ can not be the identity. 
\end{enumerate}
\end{lemma}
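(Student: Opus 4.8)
The plan is to establish (a) by combining the volume estimate already recorded with the classification of del Pezzo surfaces by Fano index, and then to deduce (b) from a ray-theoretic contradiction built on (a). For (a), I would begin from the inequalities in the proof of Lemma \ref{Volbd}: since $\dim Z_l=1$ we have $-K_X^3\le a\,K_{X_l}^2\cdot F_l=a\,K_{F_l}^2$, while $\dim\varphi_r(E)=0$ gives $a\le 3b$ by Lemma \ref{gen}. Feeding these into the hypothesis $-K_X^3>72$ yields $72<3b\,K_{F_l}^2$, hence $b\,K_{F_l}^2>24$; as $F_l$ is a smooth del Pezzo surface, $K_{F_l}^2\le 9$, which forces $b>8/3$. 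I would then restrict $-K_{X_l}\equiv aF_l+bE_l$ to a general fibre: because $F_l$ is a fibre of $f_l$ over a curve one has $F_l|_{F_l}\equiv 0$, so adjunction gives $-K_{F_l}=-K_{X_l}|_{F_l}\equiv b\,(E_l|_{F_l})$. Thus $E_l|_{F_l}$ is a nonzero effective integral class on the ray spanned by $-K_{F_l}$; writing $-K_{F_l}=\iota M$ with $M$ the primitive generator of that ray and $\iota$ the Fano index of $F_l$, and $E_l|_{F_l}=qM$ with $q\in\bZ_{>0}$, we obtain $b=\iota/q$. The bound $b>8/3$ forces $\iota\ge 3$, and since a smooth del Pezzo surface has index $3$ exactly when it is $\bP^2$, it follows that $F_l\cong\bP^2$, $\iota=3$, $q=1$, whence $b=3$ and $E_l|_{F_l}\cong\cO_{\bP^2}(1)$. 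Finally $K_{F_l}^2=9$ together with $72<-K_X^3\le 9a$ and $a\le 3b=9$ pins down $8<a\le 9$, completing (a).

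For (b), I would argue by contradiction: assume $\chi=\chi_l$ is the identity, so that $X=X_l$ simultaneously carries the Mori fibre space $f_l\colon X\to\bP^1$ with general fibre $F\cong\bP^2$ and the divisorial contraction $\varphi_r\colon X\to Y_r$ contracting $E$ to a point. Part (a) gives $E|_F\cong\cO_{\bP^2}(1)$, so $E$ is not a fibre (as $E|_F\ne 0$) and for general $F$ the intersection $\ell:=E\cap F$ is a line in $F\cong\bP^2$, in particular a curve with $[\ell]\ne 0$ in $N_1(X)_\bR$. On one hand $\ell$ lies in the fibre $F$ of $f_l$, so its class generates the extremal ray $R_l$ contracted by $f_l$ (here $\rho(X/\bP^1)=1$); on the other hand $\ell\subset E$ and $\varphi_r(E)$ is a point, so $\ell$ is contracted by $\varphi_r$ and $[\ell]$ lies on the extremal ray $R_r$. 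Since $\varphi_r$ is $K_X$-trivial while $f_l$ is $K_X$-negative, the rays $R_r$ and $R_l$ are distinct; as $N_1(X)_\bR$ is two-dimensional we get $R_l\cap R_r=\{0\}$, which forces $[\ell]=0$, a contradiction. Hence $\chi$ cannot be the identity.

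The substantive step is (b), and its crux is the observation that the exceptional divisor $E$ cuts a line on each general $\bP^2$-fibre, and that this line is at once a fibre curve of $f_l$ and a curve contracted by $\varphi_r$, so its class would have to sit on two distinct extremal rays simultaneously. Verifying that this line is a genuine nonzero curve — equivalently, that $E$ dominates $\bP^1$ rather than being a fibre of $f_l$, which uses $E|_F\cong\cO_{\bP^2}(1)\ne 0$ from (a) — is the only place demanding care. In (a) the one delicate point is upgrading the numerical proportionality $-K_{F_l}\equiv b\,(E_l|_{F_l})$ to the integral relation $b=\iota/q$ via the Fano index, which rests on $E_l|_{F_l}$ being an honest effective integral class on the general fibre.
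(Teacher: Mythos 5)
Your proposal is correct. Part (a) follows essentially the paper's route: the paper also starts from $-K_X^3\le aK_{F_l}^2$ and $a\le 3b$, first forces $K_{F_l}^2=9$ (so $F_l\cong\bP^2$) and $b>2$, and then pins down $b=3$ and $E_l|_{F_l}\cong\cO_{\bP^2}(1)$ by intersecting $-K_{X_l}\equiv aF_l+bE_l$ with a line $C_l\subset F_l$, using $E_l\cdot C_l\in\bZ_{>0}$ (the case $E_l\cdot C_l=0$ is excluded there by a Kodaira-dimension argument). Your Fano-index packaging of the same integrality constraint is equivalent and marginally cleaner, since the nonvanishing of $E_l|_{F_l}$ comes for free from $-K_{F_l}\equiv b\,E_l|_{F_l}\not\equiv 0$. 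Part (b), however, is genuinely different from the paper. The paper argues numerically: if $\chi$ is the identity, then $F^2\equiv 0$ and $F\cdot E^2=(E|_F)^2=1$, so $-K_X^3=(aF+3E)^3=27a+27E^3>81$ (using $a>8$ and $E^3>0$ for a divisorial contraction to a point), contradicting the bound $-K_X^3\le 81$ from the proof of Lemma \ref{Volbd}. You instead observe that the line $\ell=E\cap F$ would be contracted both by $f_l$ and by $\varphi_r$, hence its nonzero class would lie on two distinct extremal rays of the two-dimensional $N_1(X)_\bR$ --- one $K$-negative, one $K$-trivial --- which is absurd. Both arguments are sound; yours avoids the volume expansion and the input $E^3>0$, at the cost of invoking the standard fact that a curve contracted by an extremal contraction has class on the corresponding ray, while the paper's computation simply reuses the numerical machinery already set up for Lemma \ref{Volbd}.
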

\begin{proof} 
For simplicity, we drop the subscript \(l\) and simply write 
$F:=F_{l}$. 

A general fiber $F$ of the Mori fiber space $f_l\colon X_l\rightarrow Z_l=\bP^1$ is a smooth del Pezzo surface and hence $K_F^2\leq 9$, where equality holds only when $F\cong\bP^2$. Suppose that either $K_F^2\leq8$ or $b\leq2$. As $a\leq 3b\leq9$ by Lemma \ref{gen}, it follows from the proof of \cite{L}*{Proposition 5.5} that
$$-K_X^3\leq aK_F^2\leq \begin{cases}9\cdot8=72, &\ {\rm if}\ K_F^2\leq8, \\ 6\cdot9=54,&\ {\rm if}\ b\leq2. \end{cases}$$ 
Hence we assume that $K_F^2>8$ and $b>2$. This is only possible when $K_F^2=9$ and $F\cong\bP^2$. Immediately we have $a\in(8,9]$ from 
$72<-K_X^3\leq aK_F^2=9a.$

Note that a line $C_l\subseteq F=\bP^2$ generates the corresponding extremal ray of the contraction $f_l$. As $E_l|_F$ is effective and Cartier, we get $E_l\cdot C_l=E_l|_F\cdot C_l\in\bZ_{\geq0}$. If $E_l\cdot C_l=0$, then $E_l=f_l^*(D)$ for some effective non-zero divisor $D$ on $\bP^1$. This is a contradiction since 
$$1=\kappa(\bP^1,D)=\kappa(X_l,E_l)=\kappa(X,E)=0,$$
where the last equality holds as $E$ is $\varphi_r$-exceptional. Hence $E_l\cdot C_l\in\bZ_{>0}$. Since $-K_{X_l}\equiv aF+bE_l$ and $F\cdot C_l=0$, we have  
$$2<b\leq bE_l\cdot C_l=-K_{X_l}\cdot C_l=-K_F\cdot C_l=3.$$
This implies that $E_l\cdot C_l=1$ and $b=3$. It follows that $-K_{X_l}\equiv aF+3E_l$, $-K_F=-K_{X_l}|_F\equiv3E_l|_F$, and  $E_l|_F\cong\cO_{\bP^2}(1)$. This finishes the proof of $(a)$.

For $(b)$, if $\chi\colon X\dra X_l$ is the identity, then $F^2\equiv0$ and 
$$-K_X^3=(aF+3E)^3=27a+27E^3>81.$$
where the last inequality follows from $a>8$ in $(a)$ and $E^3>0$. This contradicts Lemma \ref{Volbd}. 
\end{proof}

\begin{proposition}\label{key1} 
With the above setup and the assumption that $-K_X^3>72$, the del Pezzo fibration $f_l\colon X_l\rightarrow \bP^1$ cannot be a $\bP^2$-bundle.
\end{proposition}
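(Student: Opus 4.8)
The plan is to assume $f_l$ is a $\bP^2$-bundle and to force $X_l$ into a short explicit list, which I then eliminate. Writing $X_l=\bP_{\bP^1}(\cE)$, Grothendieck's splitting theorem and a twist let me take $\cE\cong\cO\oplus\cO(d_1)\oplus\cO(d_2)$ with $0\le d_1\le d_2$. Put $\xi:=c_1(\cO_{\bP(\cE)}(1))$ and $F:=F_l$; then $\xi^2\cdot F=1$, $\xi\cdot F^2=F^3=0$, $\xi^3=d_1+d_2$, and $-K_{X_l}\equiv 3\xi+(2-d_1-d_2)F$. Because $E_l|_{F}\cong\cO_{\bP^2}(1)\cong\xi|_F$ by Lemma~\ref{key0}$(a)$, the class $E_l-\xi$ is fibrewise trivial, so $E_l\equiv\xi-kF$ for some $k\in\bZ$. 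Substituting into $-K_{X_l}\equiv aF+3E_l$ gives $a=3k+2-d_1-d_2\in\bZ$; since $8<a\le 9$ by Lemma~\ref{key0}$(a)$, necessarily $a=9$ and $d_1+d_2=3k-7$.

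Next I pin down $\cE$ by the rigidity of $E_l$. As in the proof of Lemma~\ref{key0}, $\kappa(X_l,E_l)=\kappa(X,E)=0$, so the effective divisor $E_l$ is rigid and $h^0(X_l,E_l)=1$. Since $h^0(X_l,E_l)=h^0(\bP^1,\cE(-k))=\sum_i\max(d_i-k+1,0)$, this equality together with $d_1+d_2=3k-7$ forces $d_2=k$ and $d_1=2k-7$ with $k\in\{4,5,6\}$. Hence $\cE$ is one of $\cO\oplus\cO(1)\oplus\cO(4)$, $\cO\oplus\cO(3)\oplus\cO(5)$, or $\cO\oplus\cO(5)\oplus\cO(6)$.

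To discard these I use that $\chi_l$, being a composition of flips, is an isomorphism in codimension one, so $\Vol(-K_X)=\Vol(-K_{X_l})$; as $-K_X$ is nef and big this reads $-K_X^3=\Vol(-K_{X_l})$. The divisor $-K_{X_l}=3\xi-(d_1+d_2-2)F$ is not nef, but its volume is computed by section counting: $h^0(X_l,-mK_{X_l})=h^0\big(\bP^1,\mathrm{Sym}^{3m}\cE\otimes\cO(-(d_1+d_2-2)m)\big)$, and passing to the limit (equivalently, integrating over the Okounkov body) gives $\Vol=\tfrac{243}{4},\ \tfrac{342}{5},\ \tfrac{783}{10}$ in the three cases. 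The first two are $\le 72$, contradicting the standing hypothesis $-K_X^3>72$; so only $\cE\cong\cO\oplus\cO(5)\oplus\cO(6)$ can survive.

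For this bundle $E_l\cong\mathbb{F}_5$, and the second extremal ray of $X_l$ is generated by its negative section $C_0=\bP(\cO)$, which has $N_{C_0/X_l}\cong\cO(-5)\oplus\cO(-6)$ and $-K_{X_l}\cdot C_0=-9$. Here $\tfrac{783}{10}\in(72,81]$ and Lemma~\ref{adj} provides no obstruction, so no numerical estimate can finish the job; this is the crux. To rule the case out I would analyse the flip locally: tracing the surgery back to $X$, the $\varphi_r$-exceptional divisor $E$ is the contraction of the $(-5)$-section of $\mathbb{F}_5$, namely $E\cong\bP(1,1,5)$ with $E^3=(-K_E)^2=\tfrac{49}{5}$, and I would show that the cyclic quotient singularity forced on $X$ by flipping an $\cO(-5)\oplus\cO(-6)$ curve cannot be terminal (equivalently, that $\bP(1,1,5)$, of anticanonical degree exceeding $9$, is not the exceptional divisor of any $K$-trivial divisorial contraction of a terminal threefold to a point). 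The main obstacle is precisely this local/birational step, since the surviving bundle is invisible to the volume and adjunction bounds that dispatch the other cases.
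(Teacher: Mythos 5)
Your reduction to the three bundles $\cO\oplus\cO(1)\oplus\cO(4)$, $\cO\oplus\cO(3)\oplus\cO(5)$, $\cO\oplus\cO(5)\oplus\cO(6)$ is correct and coincides with the paper's list $(\alpha,\beta)=(4,1),(5,3),(6,5)$, and your volume computation is legitimate (since $\chi_l$ is small one indeed has $-K_X^3=\Vol(-K_X)=\Vol(-K_{X_l})$) and correctly yields $243/4$, $342/5$, $783/10$, killing the first two cases. But the proof stops there. For $\cE=\cO\oplus\cO(5)\oplus\cO(6)$ you only describe what you ``would show'': the identification of $E$ with $\bP(1,1,5)$, the claim that flipping a curve with normal bundle $\cO(-5)\oplus\cO(-6)$ forces a non-terminal point, and even the assertion that the negative section generates the contracted ray of the last link of the two-ray game are all asserted, not established. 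Since $783/10\in(72,81]$ is compatible with every numerical bound available (Lemmas \ref{adj}, \ref{gen}, \ref{Volbd}), this surviving case is precisely the crux, and as written the Proposition is not proved.

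For comparison, the paper does not use volumes at all; it treats all three bundles uniformly and torically. It realizes $X_l$ as a toric $\bP^2$-bundle, identifies the second extremal ray $\bR_{\geq0}[C_0'']$ with $-K_{X_l}\cdot C_0''=2-\alpha-\beta<0$, uses Lemma \ref{key0}$(b)$ to see that $\chi_l$ is a nontrivial composition of flips so that the contraction of this ray must be the last anti-flipping contraction, constructs the flip $X_l^-$ explicitly by exchanging the wall $\tau(e_1e_2)$ for $\tau(v_0v_1)$, and then exhibits an exceptional divisor over the cone $\tilde{\sigma}_0^-$ with negative discrepancy, contradicting the terminality of all intermediate models of the two-ray game. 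To complete your argument along the lines you sketch, you would need to actually carry out such an explicit local computation for the $(5,6)$ case rather than appeal to it; your first two cases could then also be absorbed into that computation, making the volume step optional.
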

\begin{proof}
Suppose on the contrary that $f_l\colon X_l\rightarrow\bP^1$ is a $\bP^2$-bundle. 
By Lemma \ref{key0}, the Cartier divisor $E_l$ satisfies $E_l|_{f^{-1}(p)}\cong\cO_{\bP^2}(1)$ for 
all $p\in\bP^1.$ By cohomology and base change \cite{H}*{Theorem 12.11}, 
$\cE:=(f_l)_*\cO_{X_l}(E_l)$ is locally free of rank 3 and $X_l\cong\bP(\cE)$. 
By Grothendieck's theorem \cite{OSS}, the locally free sheaf $\cE$ is a direct sum of line bundles. 
Taking a normalization, we may assume that $\cE=\cO_{\mathbb{P}^{1}}(\alpha)\oplus
\cO_{\mathbb{P}^{1}}(\beta)\oplus\cO_{\mathbb{P}^{1}}$ for 
some integers $\alpha\geq\beta\geq0$. Let $\cO_{X_{l}}(1)$ be the dual of
the tautological bundle of $\mathbb{P}(\cE)$. 
By abuse of notation, we shall also denote by 
\(\cO_{X_{l}}(1)\) the divisor associated to the line bundle \(\cO_{X_{l}}(1)\).
Since $\Pic(X_l)=\bZ F+\bZ\cO_{X_{l}}(1)$, 
we can write $\cO_{X_{l}}(E_l)\cong\cO_{X_{l}}(1)\otimes \cO_{X_{l}}(tF)$ 
for some $t\in\bZ$.  
By the projection formula, 
\begin{align*} 1&=h^0(Y_r,\cO_{Y_r})=h^0(X,\cO_X(E))\\
&=h^0(X_l,\cO_{X_l}(E_l))=h^0(\bP^1,\cO_{\bP^1}(t+\alpha)\oplus\cO_{\bP^1}(t+\beta)\oplus\cO_{\bP^1}(t)),
\end{align*}
so we get $-t=\alpha>\beta\geq0.$ On the other hand, from the Euler sequence and Lemma \ref{key0} $(a)$,
$$-K_{X_l}=\cO_{X_{l}}(3)+(2-\alpha-\beta) F\equiv aF+3E_l\sim \cO_{X_{l}}(3)+(a-3\alpha)F.$$
It follows that $a=2+3\alpha-(\alpha+\beta)=2+2\alpha-\beta\in\bZ$ and hence $a=9$ from Lemma \ref{key0} $(a)$. 
Therefore, from the assumption
$$0\leq 2\alpha-7=\beta<\alpha,$$
we see that $(\alpha,\beta)=(6,5),$ $(5,3)$, or $(4,1)$ are the only 
possible solutions.

Following \cite{CLS}*{Example 7.3.5}, we employ a toric description for the 
toric variety $X_l=\bP_{\bP^1}(\cO_{\bP^1}\oplus\cO_{\bP^1}(\alpha)\oplus\cO_{\bP^1}(\beta))$. 
Consider in $\bR^3$ the vectors 
$$-u_0=u_1=(1,0,0),~e_1=(0,1,0),~e_2=(0,0,1),~e_0=-e_1-e_2.$$
Let $v_0=u_0+\alpha e_1+\beta e_2$, $v_1=u_1$, and consider the cones 
\begin{align*} 
\sigma_0^+=\<v_1,e_1,e_2\>;~\sigma_1^+=\<v_1,e_0,e_2\>;~\sigma_2^+=\<v_1,e_0,e_1\>;\\
\sigma_0^-=\<v_0,e_1,e_2\>;~\sigma_1^-=\<v_0,e_0,e_2\>;~\sigma_2^-=\<v_0,e_0,e_1\>.
\end{align*} 
Then fan, which is the collection of the cones above together
with all their proper faces, defines the toric variety \(X_{l}\).

If $D_i$ and $E_j$, $i=0,1,2$ and $j=0,1$, are toric divisors 
corresponding to $e_i$ and $v_j$ respectively, then 
\begin{align*} D_0\stackrel{\chi^{e_1^\vee}}{\sim}D_1+\alpha 
E_0,\ D_0\stackrel{\chi^{e_2^\vee}}{\sim}D_2+
\beta E_0,\ E_0\stackrel{\chi^{u_1^\vee}}{\sim}E_1,
\end{align*}
and
$$-K_{X_l}=D_0+D_1+D_2+E_0+E_1\sim 3D_0+(2-\alpha-\beta)E_1.$$
It is clear that $\cO(D_0)$ is the relative ample sheaf
and $F\sim E_0\sim E_1$ is the fiber class of $f_l\colon X_l\rightarrow Z_l$.

There are, in total, nine toric invariant curves described as, for $0\leq i\leq 2$,  
$$C_i=V(\tau(v_1e_i));~C'_i=V(\tau(v_0e_i));~C''_i=V(\tau(e_0\hat{e_i}e_2)),$$
which satisfy 
$$F\cdot C_i=F\cdot C_i'=0,\ F\cdot C''_i=1;$$ 
$$D_0\cdot C_i=D_0\cdot C'_i=1,~D_0\cdot C''_0=0,~D_0\cdot C''_1=\alpha,~D_0\cdot C''_2=\beta.$$
We infer that both $D_0$ and $F$ are nef and hence base point free (cf.~\cite{CLS}). 
Clearly, we have $[C_l]=[C_i]=[C'_i]$, so the nef cone and the Mori cone of curves are respectively given by
$$\Nef(X_l)=\bR_{\geq0}[D_0]+\bR_{\geq0}[F];\ \NE(X_l)=\bR_{\geq0}[C_l]+\bR_{\geq0}[C''_0].$$
Note that 
$$-K_{X_l}\cdot C_0''=2-\alpha-\beta<0.$$ 
By Lemma \ref{key0} $(b)$, the map $\chi_l\colon X\dashrightarrow X_l$ is not 
the identity map and it is a composition of finitely many flips. 
Hence, the contraction $g^{+}:=\varphi_{|mD_0|}\colon X_l^+:=X_l\rightarrow T$ must be the last anti-flipping contraction with $R=\bR_{\geq0}[C_0'']$ the contracting extremal ray. 

The flipping contraction $g^{-}\colon X_l^-\rightarrow T$ can be constructed by 
replacing the wall $\tau(e_1e_2)$ with $\tau(v_0v_1)$ so that the cones of $X_l^-$ are 
\begin{align*} 
\tilde{\sigma}_0^+=\<v_0,v_1,e_1\>;~\sigma_1^+=\<v_1,e_0,e_2\>;~\sigma_2^+=\<v_1,e_0,e_1\>;\\
\tilde{\sigma}_0^-=\<v_0,v_1,e_2\>;~\sigma_1^-=\<v_0,e_0,e_2\>;~\sigma_2^-=\<v_0,e_0,e_1\>.
\end{align*} 
See Figure \ref{flip}.
\begin{center}
\begin{figure}[t!]
  \caption{The last flip.}
\includegraphics[scale=0.65]{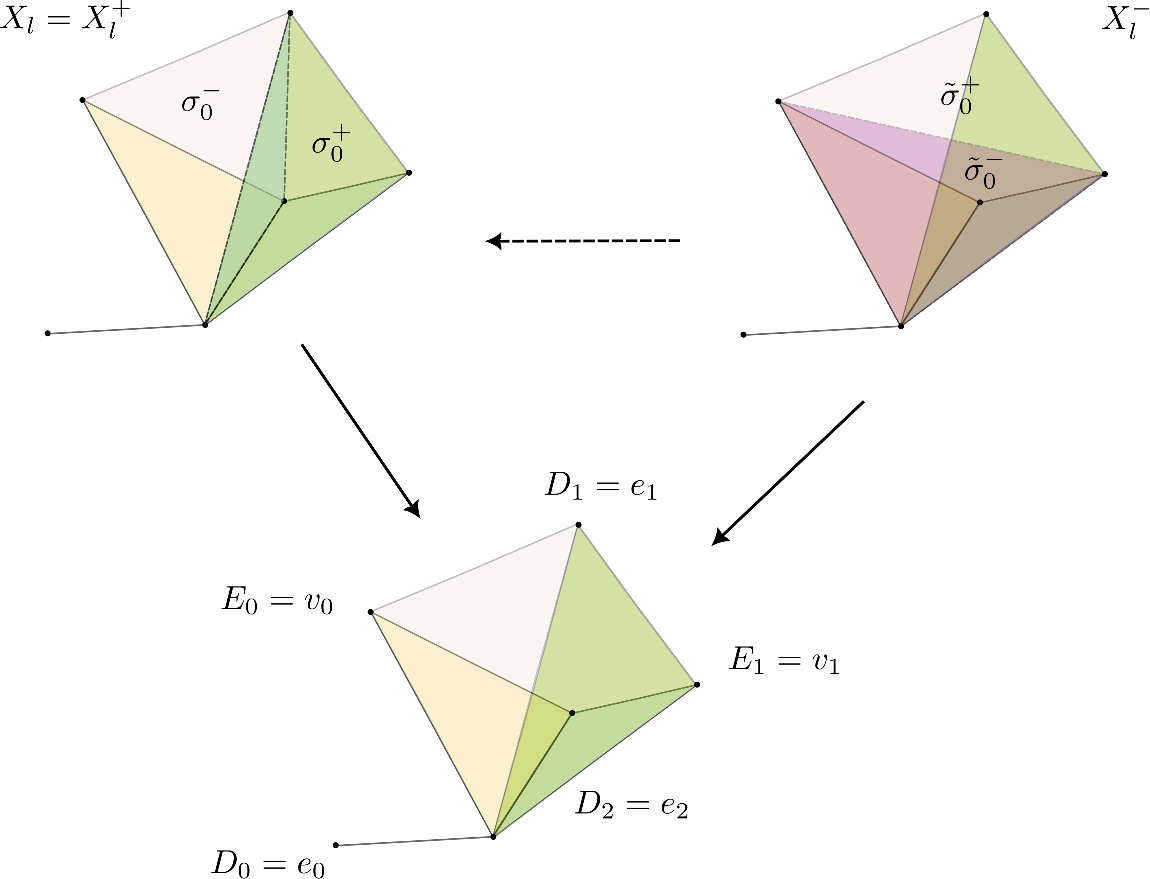}
 \label{flip}
\end{figure}    
\end{center}

Indeed, to avoid introducing new notations, 
denote by $D_i$, $E_j$ again the toric divisors corresponding to 
\(v_{i}\) and \(e_{j}\)
on $X_l^-$ and set $C_-:=V(\tau(v_0v_1))$. We have $D_0\cdot C_-=0$, 
$$D_1\cdot C_-=\frac{\mult(\tau(v_0v_1))}{\mult(\tilde{\sigma}_0^+)}=
\frac{\lcm(\alpha,\beta)}{\beta},$$
and
$$D_2\cdot C_-=\frac{\mult(\tau(v_0v_1))}{\mult(\tilde{\sigma}_0^-)}=\frac{\lcm(\alpha,\beta)}{\alpha}.$$

From the wall relation $\alpha e_1-v_0-v_1+\beta e_2=0$, we also get 
\begin{align*}E_0\cdot C_-&=E_1\cdot C_-=\frac{-1\cdot\mult(\tau(v_0v_1))}{\alpha\cdot\mult(\tilde{\sigma}_0^+)}=\frac{-1\cdot\mult(\tau(v_0v_1))}{\beta\cdot\mult(\tilde{\sigma}_0^-)}=-\frac{1}{\alpha\beta}.
\end{align*}
Since  $-K_{X^-_l}=E_0+E_1+D_0+D_1+D_2$, this implies that
$$-K_{X_l^-}\cdot C_-=\frac{\alpha+\beta-2}{\alpha\beta}>0.$$

Note that $X_l^-$ has two singular points along two maximal cones $\{\tilde{\sigma}_0^+, \tilde{\sigma}_0^-\}$. 
Consider $m_{\tilde{\sigma}_0^-}=(1,(2-\beta)\alpha^{-1},1)$ on $\tilde{\sigma}_0^-$ 
the associated $\bQ$-Cartier data of the canonical divisor.  For all possible solutions 
$(\alpha,\beta)=(6,5), (5,3),$ or $(4,1)$, the discrepancy $a(E_w)$ of the exceptional
divisor \(E_{w}\) obtained by adding the \(1\)-cone
generated by the primitive generator $w=(0,1,1)\in{\rm int}(\tilde{\sigma}_0^-)$ 
is 
$$a(E_w)=m_{\tilde{\sigma}_0^-}\cdot(0,1,1)-1=
\frac{2-\beta}{\alpha}+1-1=\frac{2-\beta}{\alpha}<0.$$
Hence, $X_l^-$ is not terminal. On the other hand, we know that all 
the intermediate models appearing in the two-ray game $\chi_l\colon X\dra X_l$ should be terminal. This is absurd. 
\end{proof}

\begin{theorem} Let $X$ be a weak $\bQ$-Fano threefold of Picard rank two. If we are in Case {\bf(II)}, then $-K_X^3\leq 72$. 
\end{theorem}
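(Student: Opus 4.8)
The plan is to combine the reductions already in place and argue by contradiction. By Lemma \ref{Volbd} (together with the observation, recorded after its proof, that $Z_l=\bP^1$ whenever $\dim Z_l=1$, since $X$ is rationally connected), the inequality $-K_X^3\le 72$ holds in every subcase of Case {\bf (II)} \emph{except} $\dim Z_l=1$ and $\dim\varphi_r(E)=0$, which is precisely the situation treated in the present section. Hence I may assume $Z_l=\bP^1$ and $\dim\varphi_r(E)=0$, and suppose for contradiction that $-K_X^3>72$. Lemma \ref{key0}$(a)$ then applies: the general fiber $F_l$ of the Mori fiber space $f_l\colon X_l\to\bP^1$ satisfies $F_l\cong\bP^2$, the Cartier divisor $E_l$ satisfies $E_l|_{F_l}\cong\cO_{\bP^2}(1)$, and $-K_{X_l}\equiv aF_l+3E_l$ with $8<a\le 9$.

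The heart of the argument is to upgrade "the general fiber is $\bP^2$" to "$f_l$ is a $\bP^2$-bundle." First I record that $E_l$ is $f_l$-ample: since $\rho(X_l/\bP^1)=1$, the relative Mori cone $\NE(X_l/\bP^1)$ is spanned by the class of a line $C_l\subset F_l\cong\bP^2$, and $E_l\cdot C_l=1>0$ by Lemma \ref{key0}$(a)$. Next I pass to the generic fiber $X_\eta$ of $f_l$ over $\eta=\Spec k(\bP^1)$: it is a smooth del Pezzo surface of degree $9$ over the function field $k(\bP^1)$, hence a Brauer--Severi surface. As $k(\bP^1)$ is a $C_1$ field, Tsen's theorem gives $\mathrm{Br}(k(\bP^1))=0$, so $X_\eta\cong\bP^2_{k(\bP^1)}$; moreover $E_l|_{X_\eta}$, being a divisor of degree $E_l\cdot C_l=1$, is the relative hyperplane class $\cO_{X_\eta}(1)$. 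I then set $\cE:=(f_l)_*\cO_{X_l}(E_l)$, which is locally free on $\bP^1$ of rank $h^0(\bP^2,\cO_{\bP^2}(1))=3$, and study the evaluation map $f_l^*\cE\to\cO_{X_l}(E_l)$, realizing over $\eta$ the identification of $X_\eta$ with $\bP^2_{k(\bP^1)}$ via $|\cO_{X_\eta}(1)|$. Using that $X_l$ is $\bQ$-factorial and terminal with $\rho(X_l/\bP^1)=1$, I would argue that $\cO_{X_l}(E_l)$ restricts to $\cO_{\bP^2}(1)$ on \emph{every} fiber, so that $R^if_{l*}\cO_{X_l}(E_l)=0$ for $i>0$ and cohomology and base change identifies $X_l$ with $\bP(\cE)$ over $\bP^1$; that is, $f_l$ is a $\bP^2$-bundle.

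This conclusion contradicts Proposition \ref{key1}, which asserts that under the standing assumption $-K_X^3>72$ the fibration $f_l$ cannot be a $\bP^2$-bundle. Therefore $-K_X^3>72$ is impossible and $-K_X^3\le 72$, as claimed. The main obstacle is the bridge of the second paragraph: Tsen's theorem instantly pins down the generic fiber as $\bP^2$, but ruling out degenerate special fibers and verifying that $E_l$ is a fiberwise hyperplane on every closed fiber is exactly where the terminality and relative-Picard-rank-one hypotheses must be used, so as to exclude the a priori possibility of a degree-$9$ del Pezzo fibration that fails to be a global projective bundle.
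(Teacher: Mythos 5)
Your reduction is the same as the paper's (Lemma \ref{Volbd} plus Lemma \ref{key0}$(a)$ leave only the case $Z_l=\bP^1$, $\dim\varphi_r(E)=0$, $-K_X^3>72$), but the bridge you need --- that $f_l$ is an honest $\bP^2$-bundle --- is exactly the step you leave unproved, and it is a genuine gap, not a routine verification. Tsen's theorem only controls the generic fiber. A degree-$9$ del Pezzo fibration with $\bQ$-factorial terminal total space and $\rho(X_l/\bP^1)=1$ can have degenerate closed fibers (for instance fibers of the form $\bP(1,1,4)$, which has $K^2=9$), so ``$E_l\cdot C_l=1$ on the general fiber'' does not upgrade to ``$E_l$ is a fiberwise hyperplane on every fiber,'' and without that you get neither the vanishing of $R^if_{l*}\cO_{X_l}(E_l)$ nor the surjectivity of $f_l^*\cE\to\cO_{X_l}(E_l)$ needed to identify $X_l$ with $\bP(\cE)$. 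Nothing in the standing hypotheses hands you this for free, and you give no argument that rules out such fibers; you acknowledge this yourself in your last sentence.

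The paper deliberately avoids proving that $f_l$ is a $\bP^2$-bundle. It forms $X':=\bP\bigl((f_l)_*\cO_{X_l}(E_l)\bigr)$ and observes that the induced map $g\colon X_l\dra X'$ is birational (generic smoothness over $\bP^1$ plus $E_l|_{F_l}\cong\cO_{\bP^2}(1)$) and small (it contracts no divisor since $\rho(X_l)=\rho(X')=2$), while Proposition \ref{key1} forbids $g$ from being an isomorphism. Choosing a $\bQ$-complement $K_X+D\sim_{\bQ}0$ with $(X,D)$ terminal, both $(X_l,D_l)$ and $(X',D')$ are terminal log Calabi--Yau pairs, so by Koll\'ar's theorem $g$ factors into flops; but the toric computation at the end of the proof of Proposition \ref{key1} shows the relevant intermediate model $X_l^-$ is not terminal, a contradiction. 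If you want to salvage your more direct route, you would have to supply a genuine argument excluding non-$\bP^2$ fibers under the hypothesis $-K_X^3>72$; as written, the proof is incomplete precisely at its load-bearing step.
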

\begin{proof} By Lemma \ref{Volbd}, it is enough to rule out the case that when $\dim Z_l=1$, $\dim\varphi_r(E)=0$, and $-K_X^3>72$.

Consider the projective bundle $X':=\bP((f_l)_*\cO_{X_l}(E_l))$ over $Z_l=\bP^1$. The associated map $g\colon X_l\dashrightarrow X'$ is birational since $X_l$ is generically smooth over $Z_l$ and $E_l|_{F_l}=\cO_{\bP^2}(1)$ by Lemma \ref{key0} $(a)$. Moreover, $g$ contracts no divisors as $\rho(X)=\rho(X')=2$ and generically an isomorphism on $F_l$ and $E_l$. Hence $g$ is small. By Proposition \ref{key1}, $g$ cannot be an isomorphism. 
By the base point freeness theorem \cite{KM}, a $\bQ$-complement $K_X+D\sim_\bQ0$ exists with $(X,D)$ being terminal. Since $X\dra X_l$ and  $X\dra X'$ are small, $(X_l,D_l)$ and $(X',D')$ are also terminal log Calabi--Yau pairs. By \cite{K:flop}, $X_l\dra X'$ factors into a sequence of flops. However, this contradicts the last part of the proof of Proposition \ref{key1} since all intermediate models should be terminal. 
\end{proof}

\section{Large Volume case}\label{LargeV}
It is known that if $X$ is $K$-semistable, then $-K_X^3\leq 64$ by \cite{Liu:Vol}. 
Then it is a curious problem to see what causes a Fano variety to be $K$-unstable. 
We exploit this question by investigating weak $\bQ$-Fano threefolds of Picard rank 
two with anticanonical degree $-K_X^3>64$. Note that this classification is complete if $X$ is Gorenstein without any constraint on the Picard rank by \cites{Kar1, Kar2}.

If $X$ is a weak $\bQ$-Fano threefold of $\rho(X)=2$ and $\varphi\colon X\ra X'$ is a $K$-negative contraction, either 
\begin{enumerate}
    \item $\varphi$ is divisorial so that $-K_X^3\leq -K_{X'}^3\leq 64$, as $X'$ is terminal Fano of $\rho(X')=1$ and we use \cite{Pr2}, or  
    \item we are in Case ${\bf (I)}$ or ${\bf (II)}$ as detailed in \cite{L}, where we can improve the upper bound to \(64\) when \(X\) is not isomorphic to \(\bP_{\bP^2}(\cO_{\bP^2}\oplus\cO_{\bP^2}(3))\).
\end{enumerate}

\begin{theorem}\label{thm:main} Let $X$ be a weak $\bQ$-Fano threefold 
with \(\rho(X)=2\) and $-K_{X}^{3}>64$. Then \(X\cong\bP_{\bP^2}(\cO_{\bP^2}\oplus\cO_{\bP^2}(3)))\).
\end{theorem}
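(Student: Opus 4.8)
The plan is to produce a $K_X$-negative extremal contraction, route it through the dichotomy preceding the statement so as to land in Case {\bf (II)} with $\dim\varphi_r(E)=0$, and then exploit the crepancy of $\varphi_r$ to rigidify the birational geometry.

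Since $-K_X$ is nef and big, $X$ carries a $K_X$-negative extremal contraction $\varphi\colon X\to X'$. If $\varphi$ is divisorial, then $X'$ is a terminal $\bQ$-Fano threefold with $\rho(X')=1$ and $-K_X^3\le -K_{X'}^3\le 64$ by the Mori--Mukai bound \cite{MM} and \cite{Pr2}, contradicting $-K_X^3>64$; hence $\varphi$ is of fibre type and $X$ lies in Case {\bf (I)} or {\bf (II)}. Case {\bf (I)} gives $-K_X^3\le 54$, so $X$ is in Case {\bf (II)}, and among the four subcases of Lemma \ref{Volbd} only $\dim\varphi_r(E)=0$ can survive $-K_X^3>64$. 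As $\varphi_r$ is $K_X$-trivial it is crepant, so $-K_X^3=-K_{Y_r}^3$; writing $E$ for the contracted divisor, $K_X|_E\equiv 0$ and adjunction give $N_{E/X}\equiv K_E$, and the contractibility of $E$ to a point makes $N_{E/X}$, hence $K_E$, anti-ample, so $E$ is a del Pezzo surface with $E\cdot\gamma=K_E\cdot\gamma<0$ for every curve $\gamma\subset E$.

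Suppose first that the Mori fibre space is already realised on $X$, that is $\chi_l=\mathrm{id}$ and $f_l\colon X=X_l\to Z_l$. If $\dim Z_l=1$ and the general (smooth) fibre $F$ is not $\bP^2$, its relative extremal curve satisfies $-K_F\cdot C_l\le 2$, forcing $b\le 2$ and $-K_X^3\le a\,K_F^2\le 3b\cdot 8\le 48$; thus $-K_X^3>64$ forces $F\cong\bP^2$. But then $E|_F$ is a nonzero effective divisor on $\bP^2$ (nonzero since $\kappa(X,E)=0$ excludes $E\cdot C_l=0$), so $E^2\cdot F=(E|_F)^2>0$, whereas $\lambda:=E\cap F$ is an effective curve on $E$ giving $E^2\cdot F=E\cdot\lambda=K_E\cdot\lambda<0$, a contradiction; so $\dim Z_l=1$ cannot occur. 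If $\dim Z_l=2$, the conic degree $-K_X\cdot C_l=2$ together with $E$ being Cartier along the general fibre give $E\cdot C_l=1$, so $E$ meets the general fibre once and is a section of $f_l$; such a section selects one component of every reducible fibre and trivialises its monodromy, which would force $\rho(X)>2$, so $f_l$ has no reducible fibres and is a $\bP^1$-bundle $\bP_{Z_l}(\cO\oplus M)$ with $E\cong Z_l$ a del Pezzo surface. The crepancy $N_E\equiv K_E$ forces $M\equiv -K_{Z_l}$, whence $-K_X=2\xi$ for the relative hyperplane class $\xi$ and $-K_X^3=8\,K_{Z_l}^2$. Consequently $-K_X^3>64$ forces $K_{Z_l}^2=9$, i.e.\ $Z_l\cong\bP^2$ and $X\cong\bP_{\bP^2}(\cO_{\bP^2}\oplus\cO_{\bP^2}(3))$.

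The remaining and principal difficulty is the case $\chi_l\ne\mathrm{id}$, where $f_l$ is reached from $X$ only after a nonempty chain of $K$-flips. These are small but not crepant, so the clean adjunction $N_{E_l}\equiv K_{E_l}$ is no longer available on $X_l$; moreover a $K$-flip strictly lowers the anticanonical volume, so one must rule out that $-K_X^3$ lands in $(64,72)$ on a non-bundle $X$ while $-K_{X_l}^3$ stays small. Here I would fall back on the two-ray game of Section \ref{caseII}: Lemma \ref{gen} still pins $b$ to its maximal integral value and gives $E_l\cdot C_l=1$, after which the explicit toric model of Proposition \ref{key1} shows that the terminating anti-flip passes through a non-terminal variety, contradicting the terminality of every stage of the game. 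The crux, and the one genuinely new input, is to confirm that this terminality obstruction survives when the threshold is lowered from $72$ to $64$: the wider numerical range admits a few additional splitting types $(\alpha,\beta)$, and for each one must exhibit an exceptional valuation over the flipped point of negative discrepancy, thereby excluding $\chi_l\ne\mathrm{id}$ and rendering the identity-case analysis exhaustive.
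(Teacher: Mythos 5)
Your reduction to Case \textbf{(II)} with $\dim\varphi_r(E)=0$ and your handling of the sub-case $\chi_l=\mathrm{id}$ are broadly in the right spirit (the sign clash $E^2\cdot F=(E|_F)^2>0$ versus $E^2\cdot F=E|_E\cdot\lambda<0$ when $F\cong\bP^2$ is a nice alternative to Lemma \ref{key0}$(b)$), but two steps there are under-justified. First, $E\cdot C_l=1$ in the conic-bundle case does not follow from $E$ being Cartier: a priori $b=1$, $E\cdot C_l=2$ is possible and is only excluded because $-K_X^3>64$ forces $a>16/3$, hence $b=2$. Second, ``a section trivialises the monodromy, so $\rho(X)>2$'' is not an argument; the paper instead pulls back an extremal curve $C$ of $Z_l$, bounds $K_{X_l}^2\cdot H_l=4(-K_Z\cdot C)-\Delta\cdot C$, and uses \cite{Pr1}*{Lemma 5.3} (the discriminant is a tree of rational curves) to force $\Delta=0$. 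Also, a $K$-flip does not lower the anticanonical volume in the direction you need: discrepancies increase under a flip, so on a common resolution $q^*(-K_{X_l})-p^*(-K_X)$ is effective, which is precisely why one can bound $-K_X^3$ by intersection numbers computed on $X_l$.

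The genuine gap is the case $\chi_l\ne\mathrm{id}$, which you explicitly leave open, and your proposed completion would not go through as stated. Lowering the threshold from $72$ to $64$ allows $a=8$ in Proposition \ref{key1}, hence $(\alpha,\beta)\in\{(3,0),(4,2),(5,4)\}$ in addition to the three types with $a=9$; for $(\alpha,\beta)=(3,0)$ the valuation $w=(0,1,1)$ has discrepancy $(2-\beta)/\alpha=2/3>0$, and the wall $\tau(v_0v_1)$ degenerates (here $v_0+v_1$ is proportional to $e_1$), so the terminality obstruction you want to propagate simply is not there. The paper sidesteps this entirely and never reruns the toric computation at the lower threshold: for $\dim Z_l=2$ it shows via the discriminant analysis and cohomology-and-base-change that $X_l\cong\bP_{\bP^2}(\cE)$ for a rank-two bundle $\cE$, so $X_l$ is smooth, admits no flipping contraction, and $\chi_l=\mathrm{id}$ is \emph{forced} rather than assumed (after which \cite{Pr1}*{\S 5} identifies $X$); for $\dim Z_l=1$ it argues directly on $X_l$, where $K_F^2=8$ yields a curve of anticanonical degree $2$ in the general fibre and hence $b\le2$, contradicting $b>2$, independently of whether $\chi_l$ is the identity. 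You would need to restructure your argument along one of these lines rather than extend the discrepancy computation.
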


\begin{proof}
From \cite{L} and the proof of Lemma \ref{Volbd}, we know that 
$-K_X^3>64$ 
only 
can occur in Case ({\bf II}).
We divide the theorem into two cases according to $\dim Z_l$. 

Recall from Lemma \ref{gen} that we can write 
\begin{equation}
    \begin{cases}
        -K_{X_{l}}\equiv aF_{l}+bE_{l},~&\mbox{if}~\dim Z_{l}=1,\\
        -K_{X_{l}}\equiv aH_{l}+bE_{l},~&\mbox{if}~\dim Z_{l}=2.
    \end{cases}
\end{equation}
For notations, see the paragraph before Lemma \ref{gen}.
In those cases, we have 
$$0<a\leq 3b\leq \begin{cases}9\ &{\rm if} \dim Z_l=1,\\6\ &{\rm if} \dim Z_l=2.\end{cases}$$
Below we write $Z=Z_l$.

\subsection{\texorpdfstring{$\dim Z=2$}{}} 
It follows that the base $Z$ is a weak del Pezzo surface with at worst type \(A\) singularities
(cf.~\cite{L}*{Theorem 2.2 and Proposition 2.3}). Moreover, by considering $H_l:=f_l^*(C)$ 
the pull-back of an extremal curve $C$ of minimal length $-K_Z \cdot C\leq 3$, we have as in the proof of \cite{L}*{Lemma 4.3} that 
\begin{equation}
    64< -K_X^3\leq a \cdot K_{X_l}^2\cdot H_l=a\cdot (f_l)_*K_{X_l}^2\cdot C\leq 6\cdot 12=72.
\end{equation}
Since $0<a\leq 3b\leq6$, this leads to 
\begin{equation}
    \frac{64}{6}<K_{X_l}^2\cdot H_l=4(-K_Z\cdot C)-\Delta\cdot C\leq 12.
\end{equation}
If $\mu\colon\tZ\rightarrow Z$ is the minimal resolution of $Z$, then there is an extremal curve $\tC$ on $\tZ$ such that 
$\mu|_{\tC}\colon\tC\rightarrow C$ is birational and $-K_Z\cdot C=-K_{\tZ}\cdot\tC\in\bZ_{>0}.$ 
Recall that \(\Delta\) is the discriminant divisor for \(f_{l}\). Now there are two possibilities.

\quad \\
\noindent{\bf Case 1: $C\nsubseteq\Supp(\Delta)$.} 
If $C\nsubseteq\Supp(\Delta)$, then we have
\begin{equation}
    \frac{64}{6}+\Delta\cdot C<4(-K_{\tZ}\cdot\tC)\leq 12.    
\end{equation}
This implies that $-K_{\tZ}\cdot\tC=3$ and $\tZ\cong\bP^2$. 
In particular, we have $Z\cong\tZ\cong\bP^2$, 
$C\cong\tilde{C}\cong\mathbb{P}^{1}$ is a line, and
$\Delta\cdot C\in\{0,1\}$. 
If $\Delta\cdot C\ne 0$, then $\Delta\ne\emptyset$ and \(\Delta\)
must be a smooth rational curve in \(\mathbb{P}^{2}\).
Since \(\Delta\) is a tree of rational curves, by \cite{Pr1}*{Lemma 5.3},
we deduce that $\Delta\cdot C=0$, $\Delta=0$, and $f_{l}\colon 
X_l\rightarrow Z=\bP^2$ is a $\bP^1$-fibration. Moreover, 
because \(f_{l}\) is equi-dimensional,
\(X_{l}\) is Cohen--Macaulay, and \(\mathbb{P}^{2}\) is smooth,
it follows that \(f_{l}\) is a \emph{flat} morphism.
Since $K_{X_l}^2\cdot H_l=12$, the inequalities
\begin{equation}
    \frac{64}{12}=\frac{16}{3}\leq a\leq 3b\leq6
\end{equation} 
imply that $b=2$ and \(a=6\). 

As in the proof of Proposition \ref{key1}, 
we denote by \(E_{l}\subset X_{l}\)  
the proper transform of \(E\) under \(\chi\). We can check that
\(E_{l}\cdot f^{-1}_{l}(p)=1\) for every \(p\in\mathbb{P}^{2}\).
It follows by cohomology and base change (recall that \(f_{l}\) is flat)
that \(f_{l\ast}\mathcal{O}_{X_{l}}(E_{l})\) 
is a rank two vector bundle over \(\mathbb{P}^{2}\).
There is also a surjection
\begin{equation}
    f_{l}^{\ast}f_{l\ast}\mathcal{O}_{X_{l}}(E_{l})\to \mathcal{O}_{X_{l}}(E_{l}).
\end{equation}
Thus \(X_{l}\cong \mathbb{P}_{\mathbb{P}^{2}}(\mathcal{E})\) with 
\(\mathcal{E}=f_{l\ast}\mathcal{O}_{X_{l}}(E_{l})\).
So \(X_{l}\) is smooth and there cannot be
any small contraction starting from \(X_{l}\). Therefore, the birational morphism
\(\chi\) in the diagram must be the identity map and
\(X\) itself is the projectivization of a rank two vector bundle over \(\mathbb{P}^{2}\).
Now we are in the scenario considered in \cite{Pr1}*{\S5} and hence we 
conclude that 
\(X=\mathbb{P}_{\mathbb{P}^{2}}(\mathcal{O}\oplus 
\mathcal{O}(3))\) is the only possibility.

\quad \\
\noindent{\bf Case 2: $C\subseteq\Supp(\Delta)$.} 
If $C\subseteq\Supp(\Delta)$, then from \cite{L}*{Lemma 4.3},
\begin{align*}64&\leq -K_X^3=aK_{X_l}\cdot H_l^2\\
&=a(3(-K_{\tZ}\cdot\tC)+2-2p_a(\tC)-\tE\cdot\tC-C\cdot(\Delta-C))\\
&\leq a\cdot 11\leq 66,
\end{align*}
where $\mu\colon\tZ\rightarrow Z$ is the minimal 
resolution, $\tC$ is the proper transform of $C$, and $\mu^{\ast}C=\tC+\tE.$ 
Hence 
\begin{equation}
    \frac{64}{11}\leq a\leq 3b\leq 6.
\end{equation}
If $p_a(\tC)\geq1$ or $-K_{\tZ}\cdot\tC\leq2$, then $-K_X^3\leq a\cdot9\leq54$. 
It follows that $-K_{\tZ}\cdot \tC=3$, $\tZ=Z=\bP^2$, and $\tE=0$. 
Now, if $\Delta-C\neq0$, then $-K_X^3\leq a\cdot 10\leq 60$. 
Hence, $\Delta=C\cong\bP^1$ is a line in $\bP^2$. 
The same argument as in the previous case shows that $\Delta\cong\bP^1$ is impossible, 
so this instance does not occur.

\subsection{\texorpdfstring{$\dim Z_l=1$}{}} We can write $-K_X\equiv a F+bE$ with 
$0<a\leq 3b\leq 9$. From the proof of Lemma \ref{Volbd}, we have 
$$-K_X^3\leq aK_F^2\leq\begin{cases}9\cdot8=72, &\ {\rm if}\ K_F^2\leq8 \\ 6\cdot9=54,&\ {\rm if}\ b\leq2 \end{cases}.$$ 
Hence if $-K_X^3\geq64$, then $b>2$ and $K_F^2\leq8$. Moreover, $K_F^2=8$, 
for otherwise $-K_X^3\leq 9\cdot7=63$. We conclude that
the smooth del Pezzo surface $F=\bP^1\times\bP^1$ or $\bF_1$. 
By taking $C\in|\cO_{\bP^1\times\bP^1}(1,1)|$ or a fiber of $\bF_1\rightarrow\bP^1$, 
we always have $-K_F\cdot C=2.$ 
Since 
$$2=-K_F\cdot C=-K_X\cdot C=(aF+bE)\cdot C=b(E\cdot C)$$ 
and $E\cdot C\in\bZ_{>0}$, we find $b\leq2$ and this is a contradiction. Hence, $\dim Z_l=1$ does not occur. 
\end{proof}

\bibliographystyle{alpha}
\bibliography{vol}		

\end{document}